\numberwithin{equation}{section}
\numberwithin{figure}{section}
\newtheorem{theorem}{Theorem}[section]
\newtheorem{lemma}[theorem]{Lemma}
\newtheorem{corollary}[theorem]{Corollary}
\theoremstyle{definition}
\newtheorem{definition}[theorem]{Definition}
\newtheorem{example}[theorem]{Example}
\newtheorem{remark}[theorem]{Remark}
\definecolor{myblue}{rgb}{0.6, 0.9, 1}
\newcommand{\Rmnum}[1]{\expandafter\@slowromancap\romannumeral #1@}
\definecolor{myblue}{rgb}{0.6, 0.9, 1}
\definecolor{mygreen}{rgb}{0,0,1}
\definecolor{purple}{rgb}{0.6,0.2,1}
\definecolor{orange}{rgb}{0.8,0,0.2}
\definecolor{purple}{rgb}{0.6,0.2,1}
\newcommand{\mP}{\mathbb{P}}
\newcommand{\mc}{\mathcal}
\newcommand{\Gal}{\operatorname{Gal}}
\newcommand{\Kbar}{\overline{K}}
\author{Jamie Juul, Holly Krieger, Nicole Looper, and Niki Myrto Mavraki}
\title{A Dynamical Shafarevich theorem for endomorphisms of $\mathbb{P}^N$}
\begin{document} 
	\begin{abstract} We prove a dynamical analogue of the Shafarevich conjecture for morphisms $f:\mathbb{P}_K^N\to\mathbb{P}_K^N$ of degree $d\ge2$, defined over a number field $K$. This extends previous work of Silverman \cite{Silverman:Shafarevich} and others in the case $N=1$.\end{abstract}
	\keywords{good reduction, Shafarevich conjecture}
	\subjclass[2020]{37P45, 37P15}
	\maketitle

\section{Introduction}

Let $K$ be a number field, let $S$ be a finite set of places of $K$ containing the archimedean places, and let $\mathcal{O}_S$ be the ring of $S$-integers of $K$. An abelian variety $A/K$ is said to have \emph{good reduction outside $S$} if there is a proper $\mathcal{O}_S$-group scheme with generic fiber $K$-isomorphic to $A$. The Shafarevich conjecture, proved by Faltings \cite{Faltings}, gives a finiteness statement for such abelian varieties. 

\begin{theorem}[Faltings]\cite{Faltings}\label{thm:Faltings} 
	Up to $K$-isomorphism, there are only finitely many principally polarized abelian varieties $A$ over $K$ of a given dimension which have good reduction outside $S$.
\end{theorem}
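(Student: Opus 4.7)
The plan is to follow Faltings' original strategy, built around the \emph{Faltings height} $h_F(A)$, an Arakelov-theoretic invariant defined via the Hodge bundle on the N\'eron model of $A$ over $\mathcal{O}_K$. The theorem splits into three assertions whose combination yields finiteness: (a) for fixed $g$, there are only finitely many principally polarized $A/K$ of dimension $g$ with $h_F(A) \leq C$; (b) among $A/K$ of fixed dimension with good reduction outside $S$, only finitely many $K$-isogeny classes occur; (c) in each such isogeny class, $h_F$ is bounded.

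Step (a) I would deduce from moduli theory: principally polarized abelian varieties of dimension $g$ are parametrized by the Siegel modular variety $\mathcal{A}_g$, and the Faltings height agrees up to bounded error with a Weil height attached to an ample line bundle on a toroidal compactification of $\mathcal{A}_g$. Northcott's theorem then applies. For step (b), fix a prime $\ell\notin S$ and consider the Tate module $T_\ell A$, a free $\bZ_\ell$-module of rank $2g$ with a continuous action of $G_K=\Gal(\Kbar/K)$ unramified outside $S\cup\{\ell\}$. The Weil bounds on Frobenius eigenvalues at unramified primes, combined with Hermite--Minkowski to control the finite extension of $K$ cut out by the mod-$\ell$ representation, force the characteristic polynomials of Frobenius to lie in a finite set; consequently the semisimplification of $V_\ell A := T_\ell A \otimes \bQ_\ell$ has only finitely many possibilities. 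To upgrade this to finiteness of isogeny classes one needs the two hardest inputs of the proof: the \emph{semisimplicity} of $V_\ell A$ as a $G_K$-module, and the \emph{Tate conjecture} $\operatorname{Hom}(A,B)\otimes \bZ_\ell \cong \operatorname{Hom}_{G_K}(T_\ell A, T_\ell B)$, which together guarantee that $V_\ell A$ determines the $K$-isogeny class of $A$.

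For step (c), the key input is Raynaud's theorem on finite flat group schemes over $\mathcal{O}_S$, which controls the ramification of $\ell$-power torsion. Using this one shows that for any $K$-isogeny $\phi:A\to B$ the difference $h_F(B)-h_F(A)$ is a half-integer lying in a bounded range determined by $K,S$, and $\dim A$, so $h_F$ takes only finitely many values on each isogeny class. The main obstacle is plainly step (b): establishing semisimplicity of $V_\ell A$ and the Tate conjecture for abelian varieties over a number field is the deep heart of the argument, and is where Faltings' genuinely new input resides. Once those are in hand, the remaining finiteness assertions reduce to the classical Diophantine inputs (Northcott, Hermite--Minkowski, the Weil conjectures) combined with Arakelov theory on $\mathcal{A}_g$.
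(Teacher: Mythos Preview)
The paper does not prove this statement at all: Theorem~\ref{thm:Faltings} is quoted from \cite{Faltings} purely as motivation, with no proof or sketch given, and the paper's actual content is the dynamical analogue Theorem~\ref{thm:dynShaf}. So there is no ``paper's own proof'' to compare against.

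That said, your outline is a faithful high-level summary of Faltings' original strategy. The three-step decomposition into (a) Northcott-type finiteness for bounded Faltings height via the moduli interpretation on $\mathcal{A}_g$, (b) finiteness of isogeny classes via the Tate conjecture and semisimplicity of $V_\ell A$, and (c) boundedness of the Faltings height within an isogeny class via Raynaud's theorem, is the standard presentation. You correctly identify the Tate conjecture and semisimplicity as the deep core of the argument. One minor point: the logical order in Faltings' paper is somewhat different, since the proof of semisimplicity and the Tate conjecture themselves \emph{use} the height-variation result (c) and a weak form of (a), so the dependencies are more intertwined than your linear presentation suggests; but as a roadmap your sketch is accurate.
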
 
	
	The goal of this paper is to provide a dynamical analogue of Theorem \ref{thm:Faltings} for morphisms $f:\mathbb{P}_K^N\to\mathbb{P}_K^N$ of degree $d\ge2$. 
Letting $\mathcal{O}_\mathfrak{p}$ denote the valuation ring of the local field $K_\mathfrak{p}$ for a prime $\mathfrak{p}$ of $K$, we say that such a morphism has \emph{good reduction outside $S$} if for all $\mathfrak{q}\notin S$, there is an $\mathcal{O}_\mathfrak{q}$-morphism $\mathbb{P}_{\mathcal{O}_\mathfrak{q}}^N\to\mathbb{P}_{\mathcal{O}_\mathfrak{q}}^N$ with generic fiber $\textup{PGL}_{N+1}(K_\mathfrak{q})$-conjugate to $f$ (cf. \cite[Remark 2.16]{Silverman:ADS}). The na\"ive dynamical analogue of the Shafarevich conjecture fails; for instance, when $N=1$, the set of monic degree $d\ge 2$ polynomials $f\in\mathcal{O}_S[x]$ consists of infinitely many $\textup{PGL}_{N+1}(K)$-conjugacy classes and all such polynomials have good reduction outside $S$. 
It is easy to generalize this construction to higher dimensions, so that the most obvious dynamical counterpart of the Shafarevich conjecture fails for all choices of $d,N,K$, and $S$. 
In order to recover an appropriate statement, one must impose more structure on the space of maps. 
As has been done under various guises in \cite{Petsche:criticallyseparable}, \cite{PetscheStout}, \cite{Silverman:Shafarevich}, \cite{SzpiroTucker}, and \cite{SzpiroWest}, we study pairs $(f,X)$ consisting of a map $f$ having good reduction outside $S$ and an appropriate finite $\Gal(\overline{K}/K)$-invariant set $X\subset\mathbb{P}^N(\overline{K})$ also having good reduction outside $S$ (see Definition \ref{good outside S}). Naturally, we would expect the set $X$ to be dynamically related to $f$ --- in particular, to have $X=Y\cup f(Y)$ for some $Y\subset\mathbb{P}^N(\overline{K})$ --- and for the set $X$ to have good reduction at all primes $\mathfrak{p}\notin S$, as in Definition \ref{good outside S}.

\begin{definition}\label{def:reductionofpoints} Let $x\in\mathbb{P}^N(K)$, let $\mathfrak{p}$ be a prime of $K$, and write $x=[x_0:\dots:x_N]$, where the $x_i$ are normalized so that they are $\mathfrak{p}$-integral for all $i$, and at least one $x_i$ is a $\mathfrak{p}$-adic unit. Then the \emph{reduction of $x$ modulo $\mathfrak{p}$} is the point $\widetilde{x}=[\widetilde{x_0}:\dots:\widetilde{x_N}]\in\mathbb{P}^N(k_{\mathfrak{p}})$, where $k_{\mathfrak{p}}$ is the residue field at $\mathfrak{p}$, and the $\widetilde{x_i}$ are the reductions of $x_i$ modulo $\mathfrak{p}$. If $X\subseteq\mathbb{P}^N(K)$, then $\widetilde{X}:=\{\widetilde{x}:x\in X\}$ is the \emph{reduction of $X$ modulo $\mathfrak{p}$}.
\end{definition}

Similarly, we define the reduction of a hyperplane as follows. 

\begin{definition}
	If $H_a: a_Nx_N+\cdots+a_0x_0=0$ is a hyperplane in $\mathbb{P}_K^N$ corresponding to $a=[a_0:\cdots:a_N]\in\mathbb{P}^N(K)$, then we write $$\widetilde{H_a}:=H_{\tilde{a}}: \tilde{a_N}x_N+\cdots+\tilde{a_0}x_0=0$$ for the reduction of $H_a$ in $\mathbb{P}^N_{k_{\mathfrak{p}}}$, where $\tilde{a}=[\tilde{a_0}:\cdots:\tilde{a_N}]$ is as in Definition \ref{def:reductionofpoints}.
\end{definition}

Good reduction of $X$ mod $\mathfrak{p}$ describes points in a certain kind of general position retaining that property upon reduction modulo the primes above $\mathfrak{p}$ in a field of definition for the points in $X$.
\begin{definition}\label{generalposition}
 Let $F$ be a field. We say that a finite set $X \subseteq\mathbb{P}^N(F)$ is in \emph{general linear position} if $|X|\ge N+1$ and no hyperplane contains $N+1$ points of $X$. 
\end{definition}

If a set $Z$ of $N$ points determines a unique hyperplane in $\mathbb{P}^N$, we denote this hyperplane by $H_Z$.  Note that if $X$ is in general linear position, then any subset $Z \subset X$ of cardinality $N$ determines a unique hyperplane $H_Z$. 

\begin{definition} \label{def:fieldofdefinition}
Let $X = \{ P_1, \dots, P_n \}$ and $G_K:= \Gal(\Kbar/K)$ then the \textit{field of definition} of the set $X$ over $K$, denoted $K(X) := K(P_1, \dots, P_n)$, is the subfield of $\overline{K}$ which is fixed by $\{ \sigma \in G_K : \sigma(P_i) = P_i \ \text{ for all } 1 \leq i \leq n \}.$	
\end{definition}

\begin{definition} \label{good outside S}
	Let $S$ be a finite set of primes in $\mathcal{O}_K$, where $\mathcal{O}_K$ denotes the ring of integers of $K$. 
	Let $X \subset \mathbb{P}^N(\overline{K})$ be a finite $\Gal(\overline{K}/K)$-invariant subset of cardinality at least $N+1$ with field of definition $K(X)$.  We say that $X$ has \emph{linear good reduction outside $S$} if for all primes $\mathfrak{p} \notin S$ and all primes $\mathfrak{P}$ in $K(X)$ lying over $\mathfrak{p}$, the reduction of $X$ modulo $\mathfrak{P}$ is in general linear position. In other words, the map that sends an $N$-element subset $Z\subset X$ to $H_Z\textup{ mod }\mathfrak{P}$ is injective for all such primes $\mathfrak{P}$ over $\mathfrak{p}$.
	\end{definition} 

\begin{remark}
When $N=1$, our definition of linear good reduction agrees with the standard notion of good reduction, which simply requires that the points remain pairwise distinct modulo all primes $\mathfrak{P}$ lying above $\mathfrak{p}$. 
\end{remark}

\begin{definition}\label{goodtriples} Let $d\geq 2$, $N\geq 1$, and $m\geq 1$ be integers. Let $K$ be a number field and $S$ a finite set of places of $K$ containing the archimedean places. We define the set of triples \[\mathcal{R}_{d,N}[m](K,S)=\{(f, X, Y) \text{ satisfying the following properties (1)-(6)}\}\]
\begin{enumerate}
	\item $f:\mathbb{P}^N\rightarrow \mathbb{P}^N$ is a degree $d$ morphism defined over $K$ with good reduction outside $S$,
	\item $X$ is a $\Gal(\Kbar/K)$-invariant subset of $\mathbb{P}^N(\overline{K})$,
	\item $X$ has linear good reduction outside $S$,
	\item $Y\subset \mathbb{P}^N(\Kbar)$ satisfies $|Y| = m$,
	\item $X = Y \cup f(Y)$,	
	\item \label{finalcondition} $Y$ is not contained in a hypersurface of degree at most $2d$.
\end{enumerate}
\end{definition}

\begin{remark}\label{rmk:N+1} By Lemma \ref{lem:generic}, condition (\ref{finalcondition}) in Definition \ref{goodtriples} can only be satisfied if $|Y|\ge\binom{N+2d}{2d}$. This forces $|Y|>N+1$, and hence $|X|>N+1$, so that our Definition \ref{good outside S} for linear good reduction is applicable to the set $X$. \end{remark}

It is not difficult to show that there exist configurations of $k$ points in $\mathbb{P}^N(\overline{K})$ satisfying the final condition provided $k\ge\binom{N+2d}{2d}$. (We prove this in the Appendix.) In fact, it is the generic expectation that a subset of at least $\binom{N+2d}{2d}$ distinct points is not contained in any degree $2d$ hypersurface, so this can be thought of as an additional general position condition. Further, as shown in the Appendix, any set $Y$ satisfying the final condition must have cardinality at least $\binom{N+2d}{2d}$.  

Throughout this paper we write, for a ring $R$, \[\textup{PL}_{N+1}(R):=\textup{GL}_{N+1}(R)/R^\times\hookrightarrow\textup{PGL}_{N+1}(R).\] When $\textup{Pic}(R)$ is trivial, $\textup{PL}_{N+1}(R)=\textup{PGL}_{N+1}(R)$ \cite[Corollary 2.7]{Faber}. Given $\phi \in\textup{PL}_{N+1}(\mathcal{O}_S)$ and $(f, X, Y)\in\mathcal{R}_{d,N}[m](K,S)$, define 
$$(f, X, Y)^{\phi} := (\phi^{-1} \circ f \circ \phi, \phi^{-1}(X), \phi^{-1}(Y)).$$ The map $f$ has good reduction outside $S$ if and only if $\phi^{-1}\circ f\circ\phi$ has good reduction outside $S$ because every $\phi\in \textup{PL}_{N+1}(\mathcal{O}_S)$ induces a scheme-theoretic isomorphism $\mathbb{P}^N_{\mathcal{O}_\mathfrak{p}}\rightarrow \mathbb{P}^N_{\mathcal{O}_\mathfrak{p}}$ for every $\mathfrak{p}\notin S$. That $X$ has linear good reduction outside $S$ if and only if $\phi^{-1}(X)$ has good reduction outside $S$ follows from the fact that $\phi\in\textup{PL}_{N+1}(\mathcal{O}_S)$ induces an automorphism of $\mathbb{P}^N(k_\mathfrak{p})$ for the residue field $k_\mathfrak{p}$ of each $\mathfrak{p}\notin S$. We thus obtain an action of $\textup{PL}_{N+1}(\mathcal{O}_S)$ on $\mathcal{R}_{d,N}[m](K,S)$. 

In this paper, we prove the following Shafarevich-style finiteness theorem.

\begin{theorem}\label{thm:dynShaf} Let $K$ be a number field, let $S$ be a finite set of places of $K$ containing the archimedean ones, let $d\ge 2$ and $N, m\ge 1$ be integers, and let $\mathcal{R}_{d,N}[m](K,S)$ be as in Definition \ref{goodtriples}. Then  $\mathcal{R}_{d,N}[m](K,S)$ admits only finitely many $\textup{PL}_{N+1}(\mathcal{O}_S)$-orbits.
\end{theorem}

\begin{remark}
Theorem \ref{thm:dynShaf} in the case $N=1$ implies Shafarevich's theorem (namely, Theorem \ref{thm:Faltings} in the case of elliptic curves). One easily verifies this by taking $f$ to be the Latt\`es map corresponding to the duplication on a Weierstrass representative of the elliptic curve and letting $Y\subset\mathbb{P}^1$ correspond to the union of the $2$-torsion and the $4$-torsion. (This is reminiscent of an argument used by Szpiro--Tucker \cite[\S2]{SzpiroTucker}.) Therefore, a natural question emerges: how does Theorem \ref{thm:dynShaf} relate to Faltings' Theorem \ref{thm:Faltings} in the setting of higher-dimensional abelian varieties? We leave this question to the reader.
\end{remark}

Theorem \ref{thm:dynShaf} in the case $N=1$ recovers \cite[Theorem 2]{Silverman:Shafarevich} with the restriction that we consider only triples $(f,X,Y)$ for which $f$ has multiplicity $1$ on $Y$. 
As in \cite{Silverman:Shafarevich}, however, our proof allows multiplicities to be taken into account when $N=1$. As Silverman explains in \cite[pp.~152-153]{Silverman:Shafarevich}, this result for $N=1$ implies the Shafarevich-style results due to Szpiro, Tucker, and West in \cite[Theorem 1]{SzpiroTucker} and \cite[Theorem 1.8]{SzpiroWest}. Each of these latter results in turn also imply Shafarevich's theorem (namely, Theorem \ref{thm:Faltings} in the case of elliptic curves), by the argument given in \cite[\S 2]{SzpiroTucker}. 

We remark that condition (\ref{finalcondition}) defining $\mathcal{R}_{d,N}(K,S)$ plays an analogous role in our proof as Silverman's condition that $\sum_{P\in Y}e_f(P)\ge 2d+1$ in his proof for the case $N=1$, where $e_f(P)$ is the local degree of $f$ at $P$ \cite[Theorem 2]{Silverman:Shafarevich}. Moreover, we claim that (\ref{finalcondition}) is in fact the ``correct" generalization of Silverman's condition, as Theorem \ref{thm:dynShaf} may be shown to fail in the absence of (\ref{finalcondition}). This failure can occur for $Y$ of arbitrarily large cardinality. The following example demonstrates this phenomenon.

\begin{example}
Let $K=\mathbb{Q}$, let \[f_c=[x_0^2:c(x_0^2+x_1^2-x_2^2)+x_0^2+x_1^2:x_0^2+x_1^2-x_2^2]\] for $c\in\mathbb{Z}$, and let $Y\subset\mathbb{P}^2(\mathbb{Q})$ be any set of cardinality at least $4$ in general linear position which is contained in the hypersurface $Z:x_0^2+x_1^2-x_2^2=0$.  Let $S$ be a finite set of places of $\mathbb{Q}$ containing the archimedean place such that $Y\cup f_c(Y)$ has  linear good reduction outside of $S$. (Note that $f_c(Y)$ is independent of $c$, and that we can form such a set $Y$ by taking suitable Pythagorean triples.) Since $|Y|\ge4$ and $Y$ is in general linear position, there are finitely many $\phi\in\textup{PGL}_3(\overline{K})$ leaving $X:=Y\cup f_c(Y)$ invariant. It follows that there are infinitely many $\textup{PGL}_3(\overline{K})$-conjugacy classes of triples in $\{(f_c,X,Y):c\in\mathbb{Z}\}$, and a fortiori infinitely many $\textup{PL}_3(\mathcal{O}_S)$-conjugacy classes of such triples. Further, $f_c$ has good reduction at all non-archimedean places for all $c\in\mathbb{Z}$. If the final condition defining $\mathcal{R}_{d,N}[m](K,S)$ were dropped, each of these triples would be an element of $\mathcal{R}_{2,2}[m](\mathbb{Q},S)$, where $m=|Y|$. 
\end{example} Similar examples may be constructed for any $N\ge 2$. We thus see that in the absence of the final condition, there is no $M$ such that Shafarevich finiteness holds for all $K,S$ and $m\ge M$ as stated in Theorem \ref{thm:dynShaf}.

The general strategy of our proof is similar to that of \cite{Silverman:Shafarevich}. First, we use the Hermite--Minkowski theorem to reduce the problem to sets $X$ with a fixed splitting field $L$. This is the content of \S\ref{reduction to finiteness over L}. Next, we show that large sets $X\subset\mathbb{P}^N(L)$ having linear good reduction outside $S$ yield many solutions to the classical $S$-unit equation over $L$. The choice of geometry behind our definition of linear good reduction is crucial to establish this connection between sets of points with linear good reduction and the $S$-unit equation. The fact, due to Siegel and Mahler, that the $S$-unit equation has only finitely many solutions allows us to show that, given $K$, $S$ and $n\gg1$, and up to $\text{PL}_{N+1}(\mathcal{O}_S)$-conjugacy, there are only finitely many $n$-element Galois-invariant $X\subset\mathbb{P}^N(\overline{K})$ having linear good reduction outside of $S$. Finally, we show that given such an $X$ and $d\ge2$, there are only finitely many morphisms $f:\mathbb{P}^N\to\mathbb{P}^N$ of degree $d$ defined over $K$ that realize $X$ as $Y\cup f(Y)$ for some $Y$ not contained in a hypersurface of degree at most $2d$. 

Each of the elements of our argument, specialized to the case $N=1$, essentially recovers Silverman's proof in \cite{Silverman:Shafarevich}. The primary difference in the higher-dimensional setting lies in the need to accommodate the fact that the action of $\textup{PGL}_{N+1}$ on $\mathbb{P}^N$ is only generically as opposed to fully $(N+1)$-transitive. It is also notable that our proof requires neither Schmidt's subspace theorem nor the generalized $S$-unit equation \cite[Theorem 7.4.1]{bombierigubler} with more than two terms, as one might na\"{i}vely expect given that these are the most natural higher-dimensional analogues of the finiteness of the set of solutions to the $S$-unit equation.

\textbf{Acknowledgements}: The authors began this work at the Women in Numbers Europe 3, and would like to thank the organizers of this workshop. They would also like to thank Xander Faber for helpful discussions on the automorphism group of projective space over general rings. During the preparation of this paper, HK was supported by the Radcliffe Institute as the 2021-22 Sally Starling Seaver Fellow, NL was partially supported by NSF grant DMS-1803021 and NMM was partially supported by NSF grant DMS-2200981. Finally, the authors extend their sincere gratitude to the referees for a very careful reading of this paper and many insightful comments.

\section{Reduction to finiteness for points defined over a single field extension}\label{reduction to finiteness over L}

Inspired by \cite[Sublemma 8]{Silverman:Shafarevich}, in this section we first prove the non-dynamical statement that the Galois-invariant sets $X\subset\mathbb{P}^N$ of a given cardinality having linear good reduction outside $S$ have all of their points defined over a fixed finite extension of $L/K$. 
This is done in Lemma \ref{one extension for all}, which is then used to deduce Lemma \ref{lem:finitetoone}. The finite-to-oneness of the map (\ref{eqn:finitetoone}) appearing in Lemma \ref{lem:finitetoone} in turn allows us to reduce Theorem \ref{thm:dynShaf} to the case where $K$ is replaced with $L$, $S$ is replaced by the set $T$ of primes of $L$ lying above $S$, and $X$ is contained in $\mathbb{P}^N(L)$ (as opposed to $X\subset\mathbb{P}^N(\overline{L})$ simply being $\Gal(\overline{L}/L)$-invariant). 

\begin{lemma}\label{one extension for all} Fix integers $d\geq 2$, $N\geq 1$, and $m\geq 1$. Then for all number fields $K$ and finite set of places $S$, there exists a fixed finite extension $L$ of $K$ such that for any $(f, X, Y) \in \mathcal{R}_{d,N}[m](K, S)$, each point of $X$ is defined over $L$.
\end{lemma}

\begin{proof}

Let $K(X)$ be the field of definition of the point set $X$ over $K$ as in Definition \ref{def:fieldofdefinition}.
As $X$ is $G_K$-invariant, the extension degree $[K(X) : K]$ is bounded above by a constant depending only on $|X|$ and $N$, where $m=|Y|\leq |X| \leq 2|Y|=2m$. 
	
If $K(X) / K$ ramifies at a prime $\mathfrak{p}$, then the inertia group is non-trivial, so there is a non-trivial element $\sigma$ of $G_K$ which preserves the valuation associated to a prime $\mathfrak{P}$ lying over $\mathfrak{p}$.  Since the element $\sigma$ is non-trivial and $K(X)$ is the fixed field of all elements acting trivially on the points of $X$, there is some $P_i \ne P_j$ such that $\sigma(P_i) = P_j$.  By definition of the inertia group, $P_i \equiv P_j \textup{ mod } \mathfrak{P}$, which implies that $X$ does not have linear good reduction at $\mathfrak{p}$. Thus, we conclude that $K(X)/K$ is unramified outside $S$.
	
The Hermite--Minkowski Theorem \cite[Theorem III.2.13]{MR1697859} states that for an algebraic number field $K$ and a finite set of primes $S$ of $K$, there exist only finitely many extensions $L/K$ of a given degree $n$ which are unramified outside $S$. Thus, there are only finitely many such $K(X)$, and we may take their compositum to assume each point of $X$ is defined over a single field as claimed.	
\end{proof}	

\begin{definition} We define a collection of subsets of $\mP^N(\overline{K})$ by
\[\mathcal{X}[n](K,S)= \left\{X\subset \mP^N(\overline{K}) : 
	\begin{array}{l}
	\text {$|X|=n$, $X$ is $\Gal(\overline{K}/K)$-invariant,}\\
	\text{and $X$ has linear good reduction outside $S$.}
	\end{array}\right\}\]
\end{definition}

\begin{definition}\label{def:setofpoints}
  Fix the following $N+2$ points in $\mP^N$;
\begin{align*}P_0=[1:0:\cdots:0],  P_1&=[0:1:0:\cdots:0],\dots,\\
P_N &=[0:\cdots:0:1], P_{N+1}=[1:\cdots:1].
\end{align*}
\end{definition}

\begin{lemma}\label{lem:lineartransformation} Let $L$ be a number field and $T$ a finite set of primes of $L$ containing all archimedean places. Suppose $\{Q_0, \dots, Q_{N+1}\}\subset \mP^N(L)$ has linear good reduction outside of $T$. Then  there is a unique linear transformation $\psi\in\textup{PL}_{N+1}(\mathcal{O}_T)$ such that 
	\[\psi(P_{i})=Q_{i} \text{ for all } 0\leq i\leq N+1,\]
where $P_0,\dots,P_{N+1}$ are the points in Definition \ref{def:setofpoints}.
\end{lemma}

\begin{proof} For a prime $\mathfrak{P}\notin T$, let $R_\mathfrak{P}$ denote the valuation ring of $K_\mathfrak{P}$, so that $\mathcal{O}_T=\bigcap_{\mathfrak{P}\notin T}R_\mathfrak{P}$. Fix an arbitrary $\mathfrak{P}\notin T$. For each $Q_i$, we choose a representative $\hat{Q}_i=(x_{i,0},\dots, x_{i,N})\in \mathbb{A}^{N+1}(L)$ for $Q_i$ such that each $x_{i,j}\in R_\mathfrak{P}$ and at least one $x_{i,j}$ is a $\mathfrak{P}$-adic unit, and similarly for $P_i$. 

Since the $Q_i$ are in general linear position, any $N+1$ of them span $\mathbb{A}^{N+1}(L)$. Thus there exist unique $\lambda_0,\dots,\lambda_N\in L$ such that \[\hat{Q}_{N+1}=\sum_{i=0}^{N}\lambda_i\hat{Q}_i.\] We note that in fact, $\lambda_i\in L^*$ for all $i$ since if some $\lambda_i$ were equal to $0$, then the $Q_j$ would not be in general linear position, because $Q_{N+1}$ would be in the linear subspace spanned by $Q_0,\dots,Q_{i-1},Q_{i+1},\dots,Q_{N}$. Similarly, note that the linear good reduction assumption implies that the $\lambda_i$ lie in $R_\mathfrak{P}^{\times}$, and so the $\lambda_i\hat{Q}_i$ are in $R_\mathfrak{P}$.
Rescaling the $\hat{Q}_i$ gives \[\hat{Q}_{N+1}=\sum_{i=0}^{N}\hat{Q}_i.\] Taking $M\in\textup{GL}_{N+1}(L)$ to have columns $\hat{Q}_i$ gives $M\hat{P}_i=\hat{Q}_i$ for all $i$. 
By construction, $M$ has $R_\mathfrak{P}$ entries, and the reduction of $M$ modulo $\mathfrak{P}$ yields an element of $\textup{GL}_{N+1}(k_\mathfrak{P})$. Thus $M\in\textup{GL}_{N+1}(R_\mathfrak{P})$. Since $\mathcal{O}_T=\bigcap_{\mathfrak{P}\notin T}R_\mathfrak{P}$ and the preceding argument holds for arbitrary $\mathfrak{P}\notin T$, it follows that $M\in\textup{GL}_{N+1}(\mathcal{O}_T)$. 
The corresponding transformation $\psi\in\textup{PL}_{N+1}(\mathcal{O}_T)$ we are after is the image of $M$ in $\textup{GL}_{N+1}(\mathcal{O}_T)/\mathcal{O}_T^\times$. 
\end{proof}

\begin{lemma}\label{lem:finitetoone} Let $N\ge1$, let $n\geq N+2$, let $K$ be a number field, and let $S$ be a finite set of places of $K$ containing all archimedean places. Then the natural map \[\mathcal{X}[n](K,S)/\textup{PL}_{N+1}(\mc{O}_S)\rightarrow \{X\subset \mP^N(\overline{K}):|X|=n\}/\textup{PGL}_{N+1}(\overline{K})\}\] is finite-to-one. In particular, for $L$ and $T$ as in Lemma \ref{one extension for all}, the natural map \begin{equation}\label{eqn:finitetoone}\mathcal{X}[n](K,S)/\textup{PL}_{N+1}(\mc{O}_S)\rightarrow\mathcal{X}[n](L,T)/\textup{PL}_{N+1}(\mc{O}_T)\end{equation} is finite-to-one.\end{lemma}

\begin{proof} We generalize Silverman's proof of \cite[Sublemma 8]{Silverman:Shafarevich}. Fix $X_0 \in \mathcal{X}[n](K,S)$ and let \[\textup{PGL}_{N+1}(K,S,X_0) = \{\phi\in\textup{PGL}_{N+1}(\overline{K}) : \phi(X_0)\in \mathcal{X}[n](K,S)\}.\] 
 Note that $\textup{PL}_{N+1}(\mathcal{O}_S)\subset \textup{PGL}_{N+1}(K,S,X_0)$.
 It suffices to define a map from $\textup{PGL}_{N+1}(K,S,X_0)$ to a finite set and to show that if two elements $\phi_1, \phi_2 \in\textup{PGL}_{N+1}(K,S,X_0)$ have the same image under this map, then they differ by an element of $\textup{PL}_{N+1}(\mc{O}_S)$. 
This then proves that $\textup{PGL}_{N+1}(K,S,X_0)/\textup{PL}_{N+1}(\mc{O}_S)$ is finite.
	
Let $L$ be as in Lemma \ref{one extension for all}. Let $T$ be a finite set of places of $L$ such that the restriction of $T$ to $K$ is $S$. We first claim that if $\phi\in\textup{PGL}_{N+1}(K,S,X_0)$, then $\phi \in\textup{PL}_{N+1}(\mc{O}_T)$. From Lemma \ref{one extension for all}, we have $X_0, \phi(X_0)\subset \mP^N(L)$. Any element of $\textup{PGL}_{N+1}(\overline{K})$ is determined by its values on $N+2$ points such that no $N+1$ of them are contained in a hyperplane, so since $n\geq N+2$, we have $\phi\in\textup{PGL}_{N+1}(L)$. 
	
	Moreover, the linear good reduction assumptions on $X_0$ and $\phi(X_0)$ imply that for any prime $\mathfrak{P}$ of $L$ with $\mathfrak{P}\notin T$, any $N+1$ points in the reduction of $X_0$ mod $\mathfrak{P}$ are not contained in a hyperplane, and similarly for $\phi(X_0)$. We claim that $\phi\in \textup{PL}_{N+1}(\mc{O}_T)$. To see this, let $Q_0,\dots, Q_{N+1}$ be $N+2$ points in $X_0$ and let $Q_0',\dots, Q_{N+1}'$ satisfy $\phi(Q_i)=Q_i'$ for all $i$.

By Lemma \ref{lem:lineartransformation}, there is a linear transformation $\psi\in\textup{PL}_{N+1}(\mathcal{O}_T)$ such that \[\psi(P_i)=Q_i \text{ for all } 0\leq i\leq N+1.\]  Similarly, there is a linear transformation $\lambda \in \textup{PL}_{N+1}(\mathcal{O}_T)$ with \[\lambda(P_i)=Q_i' \text{ for all } 0\leq i\leq N+1.\] As noted above, $\phi$ is completely determined by its action on $Q_0,\dots, Q_{N+1}$. Hence, \[\phi=\lambda\circ\psi^{-1} \in\textup{PL}_{N+1}(\mc{O}_T).\]
	
Next we let $S_{X_0}$ denote the set of permutations of $X_0$, and define a map 
\begin{align*}
\mathcal{M}: \textup{PGL}_{N+1}(K,S,X_0)&\rightarrow \textup{Map}_{\textup{Set}}(\Gal(L/K), S_{X_0}) \\ 
\phi&\mapsto (\sigma\mapsto \phi^{-1}\circ \phi^\sigma).
\end{align*} 
Note that $\Gal(L/K)$ and $ S_{X_0}$ are both finite, so $\textup{Map}_{\textup{Set}}(\Gal(L/K), S_{X_0})$ is finite. 
To see that this map is well-defined, we must check that for any $\phi\in\textup{PGL}_{N+1}(K,S,X_0)$ and any $\sigma \in \Gal(L/K)$, we have $\phi^{-1}\circ \phi^\sigma: X_0\rightarrow X_0$. Let $\phi \in\textup{PGL}_{N+1}(K,S,X_0)$ and $\sigma \in \Gal(L/K)$. The sets $X_0, \phi(X_0)\subset \mP^N(L)$ are both $\Gal(L/K)$-invariant. Hence, for all $\sigma\in \Gal(L/K)$, we have \[\phi(X_0) = (\phi(X_0))^\sigma=\phi^\sigma(X_0^\sigma)=\phi^\sigma(X_0).\] This yields $\phi^{-1}\circ \phi^\sigma:X_0\rightarrow X_0$.

Now fix $\phi_1,\phi_2 \in\textup{PGL}_{N+1}(K,S,X_0)$ and suppose that 
$$\mathcal{M}(\phi_1)=\mathcal{M}(\phi_2) \in\textup{Map}_{\textup{Set}}(\Gal(L/K), S_{X_0}).$$ Then for all $\sigma \in \Gal(L/K)$, $\phi_1^{-1}\circ \phi_1^\sigma$ and $\phi_2^{-1}\circ\phi_2^\sigma$ have the same action on $X_0$. Since $X_0$ has $n\geq N+2$ points in general position, we must have  $\phi_1^{-1}\circ \phi_1^\sigma = \phi_2^{-1}\circ\phi_2^\sigma$ as elements of $\textup{PGL}_{N+1}(L)$. So \begin{equation}\label{eqn:sigmaidentity}\phi_1\circ \phi_2^{-1} = \phi_1^\sigma\circ(\phi_2^\sigma)^{-1} = (\phi_1\circ\phi_2^{-1})^\sigma.\end{equation} Since this holds for all $\sigma \in \Gal(L/K)$, we conclude that $\phi_1\circ\phi_2^{-1} \in\textup{PGL}_{N+1}(K)$; indeed, for $\Upsilon\in\textup{GL}_{N+1}(L)$ any representative of $\phi_1\circ\phi_2^{-1}$, the relation (\ref{eqn:sigmaidentity}) implies that each $\sigma\in\Gal(L/K)$ fixes all ratios of entries in $\Upsilon$, and hence that all of these ratios lie in $K$.
We have already seen that $\phi_1, \phi_2\in\textup{PL}_{N+1}(\mc{O}_T)$, and thus we conclude that $\phi_1\circ\phi_2^{-1}\in\textup{PL}_{N+1}(\mc{O}_{S})$.\end{proof}

\section{Reduction to bounds on the number of solutions to $S$-unit equations}

In this section, we prove the finiteness (up to $\text{PL}_{N+1}(\mathcal{O}_S)$-conjugacy) of the number of sets $X\subset\mathbb{P}^N(K)$ having linear good reduction outside $S$, with the number of such sets depending only on $|X|$ and $|S|$; see Corollary \ref{cor:Xgoodred}. We will show that this comes down to the finiteness of the number of solutions to the classical $S$-unit equation.

\begin{theorem}[$S$-unit Theorem]\cite[Theorem 5.2.1]{bombierigubler}\label{thm:s-unit} There are absolute computable constants $C_1, C_2$ with the following property. Let $\Gamma$ be a subgroup of $\overline{\mathbb{Q}}\times \overline{\mathbb{Q}}$ with $\operatorname{rank}_\mathbb{Q}(\Gamma)=r<\infty$, where  $\operatorname{rank}_\mathbb{Q}(\Gamma)$ is the maximum number of multiplicatively independent elements in $\Gamma$. Then the equation \[x+y=1, (x,y)\in \Gamma\] has at most $C_1\cdot C_2^r$ solutions.
\end{theorem}

Recall that in Definition \ref{def:setofpoints}, we specified $N+2$ points $P_0,\dots,P_{N+1}$. 
We denote the homogeneous coordinates of $\mathbb{P}^N$ by $z_0,\ldots,z_N$ and let
\begin{equation}\label{eqn:Hexpression} \mathcal{H}:=\left(\displaystyle\bigcup_{\substack{0\le i\neq j\le N}}\{z_i-z_j=0\}\right)\cup\left(\displaystyle\bigcup_{i=0}^N\{z_i=0\}\right)\end{equation} be the union of the big diagonal and the coordinate hyperplanes.
Note that $P_0,\ldots,P_N\in\mathcal{H}$.  

\bigskip

In the rest of the section we make the following assumption. \\

\noindent\textbf{Assumption PID: }$S$ is large enough so that $\mathcal{O}_S$ is a PID. \\

\begin{remark}\label{pid}
    By the finiteness of the ideal class group of $K$, it follows that for any given set $S'$ of places of $K$ there is a finite set $S''$ of places of $K$ so that $S'\cup S''$ satisfies Assumption PID. 
\end{remark}

\begin{definition}\label{normalized}
    A point $x\in \mathbb{P}^N(K)$ is in $S$-normalized form if it is written in homogeneous coordinates $x=[x_0:x_1:\cdots:x_n]$ satisfying
    \begin{itemize}
	\item $x_0,\ldots,x_N\in \mathcal{O}_S$; and 
	\item $\max\{|x_0|_{\mathfrak{p}},\ldots,|x_N|_{\mathfrak{p}}\}=1$ for all primes $\mathfrak{p}\notin S$.\end{itemize}
\end{definition}

 \begin{remark}
     Since $S$ satisfies Assumption PID, every point $x\in\mathbb{P}^N(K)$ can be written in $S$-normalized form.
 \end{remark}

The following lemma will be crucial in the proof of Theorem \ref{thm:dynShaf}.
\begin{lemma}\label{tounit}
	Let $N\ge 1$, let $K$ be a number field and let $S$ be a finite set of primes of $K$, with group of $S$-units denoted $\mathcal{O}^{*}_S$.
	Let $X\subset \mathbb{P}^N(K)$ be such that
	\begin{enumerate}
		\item 
		$X$ has linear good reduction outside $S$.
		\item 
		$P_0,\ldots,P_N,P_{N+1}\in X$.
	\end{enumerate}
	Then, for all $x=[x_0:x_1:\cdots: x_N]\in X\setminus \mathcal{H}$ written in $S$-normalized form, we have 
 \begin{align*}
  x_i-x_j&\in \mathcal{O}^{*}_S,~\text{for all }0\le i<j\le N,\\
  x_i&\in \mathcal{O}^{*}_S,~~\text{for all }0\le i\le N.
 \end{align*}
\end{lemma}

\begin{proof}
	Let $\mathfrak{p}$ be a prime of $K$ not in $S$ and let $x=[x_0:x_1:\cdots:x_n]\in X\setminus \mathcal{H}$ be written in $S$-normalized form. 
	For $i\neq j$, let $H_{ij}$ be the hyperplane $z_i=z_j$. We note that $x\notin H_{ij}$ but that $H_{ij}$ contains $N$ of the $P_k$; more precisely 
 \begin{align*}
     \{P_k~:~0\le k\le N, k\neq i, k\neq j\}\subset H_{ij}.
 \end{align*}
	Since $X$ has linear good reduction outside $S$, we thus see that $x\textup{ mod } \mathfrak{p}$ cannot lie in the hyperplane $H_{ij} \textup{ mod } \mathfrak{p}$. 
	Therefore $x_i\neq x_j \textup{ mod } \mathfrak{p}$. 
	Recalling that $x_0,x_1,\ldots,x_N$ are $S$-integers and $\mathfrak{p}\notin S$ is arbitrary we get that $x_i-x_j\in \mathcal{O}^{*}_S$ as claimed. Similarly, for each $i$, let $H_i$ be the hyperplane $z_i=0$. Then $x\notin H_i$, but $H_i$ contains $N$ of the points $P_k$. Since $X$ has linear good reduction outside $S$, we infer that $x_i\in \mathcal{O}^{*}_S$ for all $i$. 
\end{proof}

\begin{corollary}\label{cor:Xgoodred}
	Let $N\ge1$, let $K$ be a number field and let $S$ a finite set of primes of $K$. 
	There is a finite set $\Pi=\Pi(K,S,N)\subset\mathbb{P}^N(K)$ such 
	if $X\subset \mathbb{P}^N(K)$ satisfies:
	\begin{enumerate}
		\item 
		$X$ has linear good reduction outside $S$,
		\item 
		$P_0,\ldots,P_N,P_{N+1}\in X$,
	\end{enumerate}
	then $X\subseteq \Pi$.
\end{corollary}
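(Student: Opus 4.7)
The plan is to translate the good-reduction hypothesis into the classical $S$-unit equation via Lemma \ref{tounit}. First I would enlarge $S$ if necessary so that $\mathcal{O}_S$ is a PID, which allows $S$-normalized coordinates; this is harmless since good reduction outside $S$ implies good reduction outside any larger $S'\supseteq S$, so a finite $\Pi$ for $S'$ bounds the $S$ case as well. Next, I would observe that $X$ itself must be in general linear position in $\mathbb{P}^N(K)$: if $N+1$ of its points lay on a $K$-rational hyperplane $H$, then reduction modulo any $\mathfrak{p}\notin S$ would place these $N+1$ reductions on $\widetilde{H}$, contradicting good reduction.

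Then I would argue that every $x\in X\setminus\{P_0,\ldots,P_{N+1}\}$ lies off the arrangement $\mathcal{H}$. Each hyperplane $\{z_i=0\}$ already contains exactly the $N$ distinguished points $\{P_k:k\ne i\}$ (note $P_{N+1}\notin\{z_i=0\}$), and each hyperplane $\{z_i-z_j=0\}$ contains the $N$ points $\{P_k:k\ne i,j\}\cup\{P_{N+1}\}$. An additional point $x\in X$ lying on any component of $\mathcal{H}$ would therefore force $N+1$ points of $X$ onto a single hyperplane, contradicting the general linear position just established. Hence Lemma \ref{tounit} applies to $x$, yielding $x_i\in U_S$ and $x_i-x_j\in U_S$ for all $0\le i\ne j\le N$, in $S$-normalized coordinates.

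Finally, dividing through by the $S$-unit $x_0$ writes $x=[1:y_1:\cdots:y_N]$ with $y_i=x_i/x_0\in U_S$ and $1-y_i=(x_0-x_i)/x_0\in U_S$ for every $i$. Each $y_i$ is thus the first coordinate of a solution to the $S$-unit equation $u+v=1$ in $U_S\times U_S$, which by the classical theorem of Mahler and Evertse has only finitely many solutions. Hence each $y_i$ takes only finitely many values, leaving only finitely many possibilities for $x$; adjoining the $N+2$ points $P_0,\ldots,P_{N+1}$ produces the finite set $\Pi=\Pi(K,S,N)$ as required. The main content of the argument is the combinatorial step that confines every extra point of $X$ to the $S$-unit regime, using general linear position together with the very specific arrangement $\mathcal{H}$; once this is set up, finiteness follows immediately from a well-established Diophantine input and no new difficulty arises.
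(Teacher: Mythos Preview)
Your proposal is correct and follows essentially the same route as the paper's own proof: both argue that any $x\in X$ outside $\{P_0,\ldots,P_{N+1}\}$ must avoid the arrangement $\mathcal{H}$ (else some component of $\mathcal{H}$ would carry $N+1$ points of $X$), then invoke Lemma~\ref{tounit} to get $x_i,\,x_i-x_j\in U_S$, and finally feed $x_i/x_0$ into the classical two-variable $S$-unit equation to conclude finiteness. Your write-up is in fact slightly more explicit than the paper's about why $X$ itself is in general linear position and about which distinguished points sit on each component of $\mathcal{H}$.
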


\begin{proof} 
Let $\mathcal{H}$ be the set of hyperplanes defined by \eqref{eqn:Hexpression}. 
First note that if $x\in X\cap\mathcal{H}$, then $x\in\{P_0,P_1,\dots,P_{N+1}\}$, since otherwise, one of the hyperplanes in $\mathcal{H}$ contains $N+1$ points of $X$, contradicting the assumption that the points of $X$ are in general position. Thus we can suppose without loss of generality that $x=[x_0:\cdots:x_N]\in X\setminus\mathcal{H}$, written in $S$-normalized form as described in Definition \ref{normalized}. Let 
$$\Pi_0=\Pi_0(K,S)=\{u\in \mathcal{O}_S^{*}~:~1-u\in \mathcal{O}_S^{*}\}.$$
Then, the $S$-unit Theorem \ref{thm:s-unit} says that $\Pi_0$ is a finite set. We define 
$$\Pi=\Pi(K,S)=\{[1:u_1:\cdots: u_N]~:~u_1,\ldots,u_N\in\Pi_0\},$$
so $\Pi$ is also a finite set. 

Next, for each $1\le i \le N$, we know from Lemma \ref{tounit} that $x_0,-x_i$, and $x_0-x_i$ are each $S$-units, which implies that 
\begin{align*}
    \frac{x_i}{x_0}\in\mathcal{O}_S^{*}\text{ and }1-\frac{x_i}{x_0}=\frac{x_0-x_i}{x_0}\in\mathcal{O}_S^{*}.
\end{align*}
It follows that $x_i/x_0\in\Pi_0$ for every $1\le i\le N$. Since $x\in\mathbb{P}^N$, we conclude that 
\begin{align*}
    x=[x_0:x_1:\cdots:x_N]=\left[1:\frac{x_1}{x_0}:\cdots:\frac{x_N}{x_0}\right]\in \Pi.
\end{align*}
Since $x$ was an arbitrary element of $X$, this completes the proof that $X\subseteq \Pi$.
\end{proof}

\begin{remark} 
Although the proof of Corollary \ref{cor:Xgoodred} assumes that $S$ satisfies Assumption PID in order to be able to invoke Lemma \ref{tounit}, we can always adjoin finitely many additional primes to $S$ and ensure that Assumption PID is satisfied as explained in Remark \ref{pid}. Therefore, Corollary \ref{cor:Xgoodred} is true without Assumption PID. 
\end{remark}

\section{Proof of Shafarevich finiteness}

\begin{proof}[Proof of Theorem \ref{thm:dynShaf}] Let $(f,X,Y)\in\mathcal{R}_{d,N}[m](K,S)$. By Lemma \ref{one extension for all}, there is a fixed extension $L/K$ depending only on $S$ such that $X\subset\mathbb{P}^N(L)$. Let $T$ be the finite set of primes of $L$ lying above $S$.
	Since $X$ is in general linear position, conjugating $(f,X,Y)$ by an element of $\textup{PL}_{N+1}(\mathcal{O}_T)$ if necessary as in Lemma \ref{lem:lineartransformation}, we may assume that $P_0,\dots,P_{N+1}\in X$. Here we have used the fact that $|X|\ge N+2$; see Remark \ref{rmk:N+1}. 
	 By Corollary \ref{cor:Xgoodred}, it follows that $X$ is contained in a finite set $\Pi'=\Pi'(L,T,N)$ of points in $\mathbb{P}^N(L)$. On the other hand, for any degree $d$ morphism $g:\mathbb{P}^N\to\mathbb{P}^N$ distinct from $f$, we claim that the subvariety given by $f=g$ is contained in a hypersurface of degree $2d$. Indeed, writing $f=[f_0:\cdots:f_N]$ and $g=[g_0:\cdots:g_N]$, this is the set \[\bigcap_{0\le i<j\le N}\{P\in\mathbb{P}^N:f_i(P)g_j(P)=f_j(P)g_i(P)\}.\] Thus, choosing $i,j$ such that $f_ig_j\ne f_jg_i$ as polynomials, we have that \[\{f=g\}\subseteq\{f_ig_j=f_jg_i\},\] which proves the claim. Since by assumption $Y$ is not contained in any degree $2d$ hypersurface, we see that $f$ is uniquely determined by its action on $Y$. Furthermore, as $Y \subseteq \Pi'$, there are only finitely many possibilities for the set $Y$. It follows that $\mathcal{R}_{d,N}[m](L,T)$ consists of only finitely many $\textup{PL}_{N+1}(\mathcal{O}_T)$-orbits. This implies, by Lemma \ref{lem:finitetoone}, that $\mathcal{R}_{d,N}[m](K,S)$ consists of only finitely many $\textup{PL}_{N+1}(\mathcal{O}_S)$-orbits.\end{proof}

\begin{appendix} 
	
	\section{}
	The purpose of the following lemma is to show that in our definition (Definition \ref{goodtriples}) of the set $\mathcal{R}_{d,N}[m](K,S)$ of good triples $(f,X,Y)$, the assumption on $Y$ not being contained in any degree $2d$ hypersurface is generically satisfied, provided $|Y|\ge\binom{N+2d}{2d}$.
	
	\begin{lemma}\label{lem:generic} Let $M,N,D\ge1$ be integers, and let $K$ be a field. Define \[\mathcal{Y}_{D,N}[M]=\left\{(P_1,\dots,P_M)\in(\mathbb{P}_K^N)^M: 
	\begin{array}{l}\textup{$\{P_1,\dots,P_M\}$ is not contained in } \\ \textup{a hypersurface of degree at most $D$.}
	\end{array}\right\}\]
 
\begin{enumerate}[(a)] \item\label{eqn:parta} If $M\ge\binom{N+D}{D}$, then $\mathcal{Y}_{D,N}[M]$ is a nonempty Zariski open subset of $(\mathbb{P}_K^N)^M$. \item\label{eqn:partb} If $M\le\binom{N+D}{D}-1$, then $\mathcal{Y}_{D,N}[M]=\emptyset$.\end{enumerate}\end{lemma} 

\begin{proof} Let $P_1,\dots,P_M$ be arbitrary points in $\mathbb{P}^N$, and let $F\in K[x_0,\dots,x_N]$ be a homogeneous polynomial defining a (hypothetical) degree $D$ hypersurface in $\mathbb{P}^N$ that contains $P_1,\dots,P_M$. Let $s_1,\dots,s_r$ be the monic degree $D$ monomials in the $N+1$ variables $x_0,\dots,x_N$, and write \begin{equation}\label{eqn:Fshape} F=\sum_{i=1}^ra_is_i\end{equation} for $a_i\in K$. As $r$ is the number of monic monomials of degree $D$ in $N+1$ variables, we have that $r=\binom{N+D}{D}$. We have \begin{equation*}\begin{array}{c} a_1s_1(P_1)+a_2s_2(P_1)+\dots+a_rs_r(P_1)=0 \\ \vdots \\ a_1s_1(P_M)+a_2s_2(P_M)+\dots+a_rs_r(P_M)=0.\end{array}\end{equation*} Switching our perspective, we can view the $a_i$ for $1\le i\le r$ as the coordinates of a vector $(a_1,\dots,a_r)\in K^r$ that is a solution to the system of equations \begin{equation}\label{eqn:system}\begin{array}{c} A_1s_1(P_1)+A_2s_2(P_1)+\dots+A_rs_r(P_1)=0 \\ \vdots \\ A_1s_1(P_M)+A_2s_2(P_M)+\dots+A_rs_r(P_M)=0,\end{array}\end{equation} where the $A_i$ are seen as indeterminates. But as $M\le r-1$, the matrix $(s_j(P_i))$ corresponds to a linear transformation with a non-trivial kernel, so in this case there exists a nontrivial solution regardless of what $P_1,\dots,P_M$ are. Any nontrivial solution $(a_1,\dots,a_r)$ yields, via (\ref{eqn:Fshape}), a hypersurface in $\mathbb{P}^N$ containing $P_1,\dots,P_M$. This proves (\ref{eqn:partb}).

Now suppose that $p_1,\dots,p_M$ are arbitrary points in $\mathbb{P}^N$, where $M\ge r$. We now view the $P_i$ in \eqref{eqn:system} as indeterminates. For each $i$, substitute $P_i=p_i$ in \ref{eqn:system}, yielding a system of equations in the indeterminates $A_i$. Since $M\geq r$, we have that \begin{equation}\label{eqn:keyfactminors}\begin{array}{c} \text{(\ref{eqn:system}) with $P_i=p_i$ for all $i$ has a} \\ \text{nontrivial solution $(a_1,\dots,a_r)$} \end{array} \Longleftrightarrow \begin{array}{c} \text{ all $r\times r$ minors of  } \\ \text{$(s_j(p_i))$ vanish.}\end{array}\end{equation} Let $\mathscr{M}$ be the set of all (not necessarily proper) square submatrices of $(s_j(P_i))$. Since for each $T\in\mathscr{M}$, $\textup{det}(T)$ is a nonzero polynomial in $P_1,\dots,P_M$, it follows that the locus $\mathcal{Z}\subsetneq(\mathbb{P}^N)^M$ of $(p_1,\dots,p_M)$ such that $(s_j(P_i))_{P_i=p_i}$ has some square submatrix with determinant equal to $0$ is Zariski closed. On the other hand, by (\ref{eqn:Fshape}) and (\ref{eqn:keyfactminors}), $\mathcal{Z}$ corresponds exactly to the points $(p_1,\dots,p_M)\in(\mathbb{P}^N)^M$ such that $\{p_1,\dots,p_M\}$ is contained in a degree $D$ hypersurface in $\mathbb{P}^N$. This proves (\ref{eqn:parta}).\end{proof}
		\end{appendix}

\bibliographystyle{plain}

\bibliography{Shaf}

\def\cprime{$'$}
\begin{thebibliography}{10}

\bibitem{bombierigubler}
E.~Bombieri and W.~Gubler.
\newblock {\em Heights in {D}iophantine Geometry}.
\newblock New Mathematical Monographs. Cambridge University Press, Cambridge,
  2006.

\bibitem{Faber}
X.~W.~C. Faber, Keith Pardue, and David Zelinsky.
\newblock Cross-ratios of scheme-valued points, 2020.
\newblock Preprint available at \url{https://arxiv.org/abs/2012.03073}.

\bibitem{Faltings}
G.~Faltings.
\newblock Finiteness theorems for abelian varieties over number fields.
\newblock In {\em Arithmetic geometry \textup{(}Storrs, Conn., 1984\textup{)}},
  pages 9--27. Springer, New York, 1986.
\newblock Translated from the German original [Invent.\ Math.\ \textbf{73}
  (1983), no.\ 3, 349--366; ibid.\ \textbf{75} (1984), no.\ 2, 381] by Edward
  Shipz.

\bibitem{MR1697859}
J.~Neukirch.
\newblock {\em Algebraic number theory}, volume 322 of {\em Grundlehren der
  Mathematischen Wissenschaften [Fundamental Principles of Mathematical
  Sciences]}.

\bibitem{Petsche:criticallyseparable}
C.~Petsche.
\newblock Critically separable rational maps in families.
\newblock {\em Compos. Math.}, 148(6):1880--1896, 2012.

\bibitem{PetscheStout}
C.~Petsche and B.~Stout.
\newblock On quadratic rational maps with prescribed good reduction.
\newblock {\em Proc. Amer. Math. Soc.}, 143(3):1145--1158, 2015.

\bibitem{Silverman:ADS}
J.~H. Silverman.
\newblock {\em The {A}rithmetic of {D}ynamical {S}ystems}, volume 241 of {\em
  Graduate Texts in Mathematics}.
\newblock Springer, New York, 2007.

\bibitem{Silverman:Shafarevich}
J.~H. Silverman.
\newblock Good reduction and {S}hafarevich-type theorems for dynamical systems
  with portrait level structures.
\newblock {\em Pacific J. Math.}, 295(1):145--190, 2018.

\bibitem{SzpiroTucker}
L.~Szpiro and T.J. Tucker.
\newblock A {S}hafarevich--{F}altings theorem for rational functions.
\newblock {\em Pure Appl. Math. Q.}, 4(3, part 2):715--728, 2008.

\bibitem{SzpiroWest}
L.~Szpiro and L.~West.
\newblock A dynamical {S}hafarevich theorem for rational maps over number
  fields and function fields, 2017.
\newblock \url{https://arxiv.org/abs/1705.05489}.

\end{thebibliography}

\end{document}